\newtheorem{theorem}{Theorem}[section]
\newtheorem{proposition}[theorem]{Proposition}
\theoremstyle{definition}
\newtheorem{definition}[theorem]{Definition}
\theoremstyle{remark}
\newtheorem{remark}[theorem]{Remark}
\newtheorem{claim}[theorem]{Claim}
\numberwithin{equation}{section}
\newcommand{\Hom}{\textnormal{Hom}}
\newcommand{\bp}{{\mathbb{P}^r}}
\newtheorem{thmx}{\textnormal{\textbf{Theorem}}}
\begin{document}

\title{A remark on the Castelnuovo-Mumford regularity of powers of ideal sheaves}


\author{}
\address{}
\curraddr{}
\email{}
\thanks{}

\author{Shijie Shang}
\address{Department of Mathematics, Statistics, and Computer Science, University of Illinois at Chicago, Chicago, IL 60607}
\curraddr{}
\email{sshang8@uic.edu}
\thanks{}

\subjclass[2010]{Primary: 14F17, 14M10. Secondary: 14M06.}

\keywords{Castelnuovo-Mumford regularity, ideal sheaves, complete intersections}

\date{\today}

\dedicatory{}

\begin{abstract}
We show that a bound of the Castelnuovo-Mumford regularity of any power of the ideal sheaf of a smooth projective complex variety $X\subseteq\mathbb{P}^r$ is sharp exactly for complete intersections, provided the variety $X$ is cut out scheme-theoretically by several hypersurfaces in $\mathbb{P}^r$. This generalizes a result of Bertram-Ein-Lazarsfeld.
\end{abstract}

\maketitle

\section{Introduction}

Let $X\subseteq \mathbb{P}^r$ be a smooth complex projective (possibly disconnected) variety of (pure) dimension $n$ and codimension $e=r-n$ and $\mathcal{I}_X$ be the ideal sheaf of $X$ in $\mathbb{P}^r$. Bertram, Ein and Lazarsfeld \cite{BEL} prove a theorem which asserts the vanishing of the higher cohomology of suitable twists of the $a$-th power of the ideal sheaf $\mathcal{I}_X$.
\begin{theorem}\label{thm1}(\cite[Proposition~1]{BEL})
	Let $X\subseteq \mathbb{P}^r$ be a smooth variety of dimension $n$ and codimension $e$ cut out scheme-theoretically by hypersurfaces of degrees $d_1\geq d_2\geq\dotsb\geq d_m$. For all integers $i\geq 1$ and all  integers $a\geq 1$, we have $H^i(\mathbb{P}^r,\mathcal{I}^a_X(k))=0$ provided $k\geq ad_1+d_2+d_3+\dotsb+d_e-r$.
\end{theorem}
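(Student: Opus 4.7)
The natural strategy is to pass to the blow-up of $\mathbb{P}^r$ along $X$ and apply a Kawamata--Viehweg-type vanishing. Let $\pi:W\to\mathbb{P}^r$ be the blow-up of $\mathbb{P}^r$ along $X$, with exceptional divisor $E$; since $X$ is smooth, $W$ is smooth, and the standard identities $\pi_*\mathcal{O}_W(-aE)=\mathcal{I}_X^a$ and $R^j\pi_*\mathcal{O}_W(-aE)=0$ for $j\geq 1$ hold. By the Leray spectral sequence the target vanishing is equivalent to
\[
H^i\bigl(W,\,\pi^*\mathcal{O}_{\mathbb{P}^r}(k)\otimes\mathcal{O}_W(-aE)\bigr)=0\qquad\text{for all }i\geq 1.
\]

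To set up such a vanishing on $W$, I would exploit the defining hypersurfaces. Each $F_j$ of degree $d_j$ vanishes on $X$ with multiplicity one, so its pull-back splits as $\tilde F_j+E$, and the proper transform $\tilde F_j$ represents the class $L_j:=\pi^*\mathcal{O}_{\mathbb{P}^r}(d_j)\otimes\mathcal{O}_W(-E)$; the scheme-theoretic cut-out condition then says the $L_j$'s have empty common zero locus on $W$. Using $K_W=\pi^*\mathcal{O}(-r-1)\otimes\mathcal{O}_W((e-1)E)$, I write $\pi^*\mathcal{O}(k)(-aE)=K_W\otimes M$ with $M=\pi^*\mathcal{O}(k+r+1)\otimes\mathcal{O}_W(-(a+e-1)E)$, and observe the key decomposition
\[
M \;=\; aL_1+L_2+\cdots+L_e+\pi^*\mathcal{O}_{\mathbb{P}^r}\bigl(k+r+1-ad_1-d_2-\cdots-d_e\bigr).
\]
The hypothesis $k\geq ad_1+d_2+\cdots+d_e-r$ forces the last factor to have degree at least $1$, so $M$ dominates a nef and big pull-back from $\mathbb{P}^r$. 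Once each $L_j$ has been shown to be nef, the whole sum $M$ is nef, bigness follows from the ``nef plus big equals big'' principle, and Kawamata--Viehweg yields $H^i(W,K_W\otimes M)=0$ for $i\geq 1$, completing the proof.

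The hard step is precisely the nefness of the $L_j$'s. Each $L_j$ is only known to be effective, and joint base-point-freeness of the system $\{L_j\}$ does not automatically give nefness of any individual member. To address this I would first replace $F_1,\ldots,F_e$ by $e$ sufficiently generic elements $G_j\in H^0(\mathbb{P}^r,\mathcal{I}_X(d_j))$ and use a Bertini-type argument, which relies crucially on the scheme-theoretic cut-out hypothesis, to arrange that the complete intersection $V(G_1,\ldots,G_e)$ decomposes as $X\cup X'$ with $X'$ smooth and disjoint from $X$. Under such a choice the proper transforms $\tilde G_j$ are well behaved along the exceptional divisor, and nefness of each $L_j$ can be verified by a case analysis of curves on $W$ (those contained in $E$, those meeting $E$ transversely, and those lying inside some $\tilde G_j$). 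If this is insufficient, the fallback is a multiplier-ideal (Nadel) or Koll\'ar-type vanishing applied directly to the effective divisor $\tilde G_1+\cdots+\tilde G_e$, which uses only joint base-point-freeness.
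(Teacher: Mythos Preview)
The paper does not prove this statement; it is quoted from \cite{BEL} and used as input throughout. Your overall architecture---blow up along $X$, reduce via Leray to a vanishing on $W$, and write $\pi^*\mathcal{O}(k)\otimes\mathcal{O}_W(-aE)=K_W\otimes M$ with the decomposition $M=aL_1+L_2+\cdots+L_e+(\text{positive multiple of }H)$---is exactly the approach of \cite{BEL}, and you have correctly isolated the one nontrivial point.

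The gap is in your primary resolution of that point. The classes $L_j=d_jH-E$ for $j\geq 2$ are in general \emph{not} nef, and no Bertini choice of sections can fix this, since nefness depends only on the numerical class. Concretely, take $X\subset\mathbb{P}^3$ to be two reduced points cut out by two linear forms and one quadric, so $e=3$ and $(d_1,d_2,d_3)=(2,1,1)$; then $L_2=L_3=H-E$ meets the proper transform $\tilde\ell$ of the secant line with intersection number $-1$, and indeed $M$ itself fails to be nef here (for $a=1$, $k=1$ one finds $M\cdot\tilde\ell=-1$), so no case analysis on curves can establish nefness. What actually closes the argument is your ``fallback'', which should be promoted to the main line: one applies Kawamata--Viehweg with a fractional boundary $\Delta=(1-\epsilon)\sum_{j=2}^{e}\tilde G_j$ for general $G_j\in H^0(\mathbb{P}^r,\mathcal{I}_X(d_j))$ and small rational $\epsilon>0$. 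Then $\lfloor\Delta\rfloor=0$ and $M-\Delta\equiv(a+(e-1)\epsilon)L_1+cH$ with $c>0$, which is ample since $L_1$ is free; the remaining verification is that the pair $(W,\Delta)$ is klt, which uses the smoothness of $X$ and a genericity argument for the $G_j$. The attempt to prove nefness of each $L_j$ outright should be dropped.
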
 

A coherent sheaf $\mathcal{F}$ on a projective space $\bp$ is \textit{$k$-regular} if $H^i(\bp,\mathcal{F}(k-i))=0$ for all $i>0$, and a projective variety $X\subseteq \bp$ is \textit{$k$-regular} if the ideal sheaf $\mathcal{I}_X$ is $k$-regular. As an application of Theorem \ref{thm1}, Bertram, Ein and Lazarsfeld \cite{BEL} yield a bound of the Castelnuovo-Mumford regularity of $X$ when $X$ is smooth and show that complete intersections are the only borderline cases.
\begin{theorem}\label{thm2}(\cite[Corollary~4]{BEL})
	Let $X\subseteq \mathbb{P}^r$ be a smooth variety of dimension $n$ and codimension $e$ cut out scheme-theoretically by hypersurfaces of degrees $d_1\geq d_2\geq\dotsb\geq d_m$. \\
	(i) The variety $X$ is $(d_1+d_2+\dotsb+d_e-e+1)$-regular, and\\
	(ii) The variety $X$ fails to be $(d_1+d_2+\dotsb+d_e-e)$-regular if and only if $X$ is the complete intersection of hypersurfaces of degrees $d_1,d_2,\dotsc,d_e$.
\end{theorem}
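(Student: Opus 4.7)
The plan is to reduce both parts to a single cohomological computation and then interpret the resulting group via adjunction and the structure of the normal bundle of $X$ in $\mathbb{P}^r$.

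\emph{Part (i).} Unwinding the definition, I need $H^i(\mathbb{P}^r,\mathcal{I}_X(d_1+\cdots+d_e-e+1-i))=0$ for all $i\geq 1$. Applying Theorem \ref{thm1} with $a=1$ handles the range $1\leq i\leq n+1$, since the twist $d_1+\cdots+d_e-e+1-i$ meets the threshold $d_1+\cdots+d_e-r$ exactly when $i\leq n+1$. For $i>n+1$, the long exact sequence associated to $0\to\mathcal{I}_X\to\mathcal{O}_{\mathbb{P}^r}\to\mathcal{O}_X\to0$ forces the vanishing, using $\dim X=n$ together with the standard cohomology of $\mathbb{P}^r$.

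\emph{Part (ii), reduction and CI direction.} The same analysis identifies the sole possible obstruction to $(d_1+\cdots+d_e-e)$-regularity as the group $H^{n+1}(\mathbb{P}^r,\mathcal{I}_X(d_1+\cdots+d_e-r-1))$ (the case $e=1$ being trivial). Using the structure sequence along with Serre duality on $X$ and the adjunction isomorphism $\omega_X\cong\mathcal{O}_X(-r-1)\otimes\det N_{X/\mathbb{P}^r}$, I would identify this with
\begin{equation*}
H^0\bigl(X,\;\det N_{X/\mathbb{P}^r}(-d_1-\cdots-d_e)\bigr)^{*}.
\end{equation*}
If $X$ is a complete intersection of the stated degrees, the conormal sequence yields $N_{X/\mathbb{P}^r}\cong\bigoplus_{i=1}^{e}\mathcal{O}_X(d_i)$, so the twisted determinant above is $\mathcal{O}_X$ and has nonzero sections, producing the asserted failure of regularity.

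\emph{Part (ii), converse.} For the other direction, the defining equations give a surjection $\bigoplus_{i=1}^{m}\mathcal{O}_X(-d_i)\twoheadrightarrow N_{X/\mathbb{P}^r}^{*}$; taking the top exterior power and dualizing produces an embedding $\det N_{X/\mathbb{P}^r}\hookrightarrow\bigoplus_{|I|=e}\mathcal{O}_X(\sum_{i\in I}d_i)$. After twisting by $\mathcal{O}_X(-d_1-\cdots-d_e)$, only summands indexed by ``extremal'' sets $I$ with $\sum_{i\in I}d_i=d_1+\cdots+d_e$ carry global sections, since $\mathcal{O}_X(1)$ is ample on each smooth projective component and therefore kills $H^0$ of strictly negative twists. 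A nonzero section of $\det N_{X/\mathbb{P}^r}(-\sum d_i)$ thus forces at least one extremal Plücker coordinate $p_{I_0}$ of the defining matrix to be nowhere vanishing on $X$. This means that the restricted map $\bigoplus_{i\in I_0}\mathcal{O}_X(-d_i)\to N_{X/\mathbb{P}^r}^{*}$ is an isomorphism; by Nakayama's lemma in a Zariski neighborhood of $X$ in $\mathbb{P}^r$, the hypersurfaces $(f_i)_{i\in I_0}$ then generate $\mathcal{I}_X$ near $X$. A further degree comparison --- using that $\sum_{i\in I_0}d_i=d_1+\cdots+d_e$ implies $\{d_i:i\in I_0\}=\{d_1,\ldots,d_e\}$ as multisets --- exhibits $X$ as the complete intersection of hypersurfaces of degrees $d_1,\ldots,d_e$.

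\emph{Principal obstacle.} I expect the last step --- promoting the local complete-intersection picture produced near $X$ into the statement that $X$ is \emph{globally} cut out by $e$ hypersurfaces of degrees $d_1,\ldots,d_e$ --- to be the main technical hurdle. This is especially delicate when $d_e=d_{e+1}$, since then multiple extremal Plücker coordinates may contribute to $H^0$ and the choice of the index set $I_0$ requires some care.
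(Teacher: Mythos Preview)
Your argument for (i) and for the ``if'' half of (ii) is correct and matches what the paper does (specialized to $a=1$): reduce to the single group $H^{n+1}(\mathbb{P}^r,\mathcal{I}_X(d_1+\cdots+d_e-r-1))$ and identify it, via the structure sequence and Serre duality, with $H^0\bigl(X,\det N_{X/\mathbb{P}^r}(-d_1-\cdots-d_e)\bigr)^{*}$.

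The converse is where your proposal diverges from the paper, and the obstacle you flag is a genuine gap rather than a technicality. Your Pl\"ucker computation shows that for some extremal index set $I_0$ the product $s_{I_0}=s\cdot p_{I_0}\in H^0(X,\mathcal{O}_X)$ is nonzero; on a connected component where this constant is nonzero, both $s$ and the minor $p_{I_0}$ are nowhere vanishing, so $(f_i)_{i\in I_0}$ generate $\mathcal{I}_X$ along that component. Two problems remain. First, when $X$ is disconnected (which the paper explicitly allows) the section $s$ may vanish on some components, and different components may select different $I_0$'s. Second---and this is the point you isolate---there is no reason for the \emph{specific} forms $(f_i)_{i\in I_0}$ to be a regular sequence in $\mathbb{P}^r$: their common zero locus can acquire extra, even higher-dimensional, components away from $X$, so generating $\mathcal{I}_X$ in a neighbourhood of $X$ does not exhibit $X$ as a global complete intersection. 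Replacing them by generic combinations destroys the Pl\"ucker information, and merely knowing $\det N_{X/\mathbb{P}^r}\cong\mathcal{O}_X(\sum d_i)$ (the subcanonical condition) is well known not to force $X$ to be a complete intersection.

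The paper sidesteps this via liaison. One chooses $e$ \emph{general} sections $s_i\in H^0(\mathbb{P}^r,\mathcal{I}_X(d_i))$ so that $Y=\bigcap D_i$ is an honest complete intersection of type $(d_1,\ldots,d_e)$ containing $X$ (the construction from \cite{BEL}). Your line bundle is precisely $\omega_X\otimes\omega_Y^{-1}|_X$, and Proposition~\ref{cor} gives an exact sequence $0\to\omega_X\otimes\omega_Y^{-1}\to\mathcal{O}_Y\to\mathcal{O}_{X'}\to 0$ with $X'$ residual to $X$ in $Y$. When $\dim X\geq 1$ the complete intersection $Y$ is connected, so $h^0(\mathcal{O}_Y)=1$; the nonzero section you produced forces the restriction $H^0(\mathcal{O}_Y)\to H^0(\mathcal{O}_{X'})$ to be zero, hence $X'=\emptyset$ and $X=Y$. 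For $\dim X=0$ the paper substitutes a Cayley--Bacharach argument, since $Y$ is then disconnected. Thus the nonvanishing you established is exactly the input the paper uses, but the local-to-global step is carried out through the liaison sequence rather than through a Pl\"ucker/Nakayama argument.
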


There has been interest in recent years in generalizing the vanishing theorem and establishing bounds of Castelnuovo-Mumford regularity of projective schemes in terms of their defining equations. Chardin-Ulrich \cite{CB}, de Fernex-Ein \cite{dFE} and Niu \cite{Niu,Niu2} generalize Theorem \ref{thm2} allowing $X$ to be a singular variety or, more generally, a singular scheme in $\mathbb{P}^r$. Promoting the results of \cite{BEL} to statements about arithmetic regularity, Ein, Hà and Lazarsfeld \cite{EHL} prove some saturation bounds for the ideals of smooth complex projective varieties and their powers.

The purpose of this note is to generalize Theorem \ref{thm2}. We prove the following.
\begin{thmx}\label{mainthm}
	Let $X\subseteq \mathbb{P}^r$ be a smooth variety of dimension $n$ and codimension $e$ cut out scheme-theoretically by hypersurfaces of degrees $d_1\geq d_2\geq\dotsb\geq d_m$. For any integer $a\geq 1$, we have\\
	(i) The sheaf $\mathcal{I}^a_X$ is $(ad_1+d_2+d_3+\dotsb+d_e-e+1)$-regular, and\\
	(ii) The sheaf $\mathcal{I}^a_X$ fails to be $(ad_1+d_2+d_3+\dotsb+d_e-e)$-regular if and only if $X$ is the complete intersection of hypersurfaces of degrees $d_1,d_2,\dotsc,d_e$.
\end{thmx}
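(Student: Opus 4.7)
The plan is to translate regularity of $\mathcal{I}_X^a$ into a cohomological non-vanishing intrinsic to $X$, via the short exact sequence
\[
0 \longrightarrow \mathcal{I}_X^a \longrightarrow \mathcal{O}_{\mathbb{P}^r} \longrightarrow \mathcal{O}_{\mathbb{P}^r}/\mathcal{I}_X^a \longrightarrow 0
\]
together with the filtration of $\mathcal{O}_{\mathbb{P}^r}/\mathcal{I}_X^a$ by the subsheaves $\mathcal{I}_X^j/\mathcal{I}_X^a$, whose graded pieces are $\mathrm{Sym}^j(N_{X/\mathbb{P}^r}^\vee)$ for $j=0,1,\ldots,a-1$ (using smoothness of $X$).

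\emph{Part (i).} With $k=ad_1+d_2+\cdots+d_e-e+1$, the short exact sequence reduces the vanishing $H^i(\mathcal{I}_X^a(k-i))=0$ for $1\le i\le n+1$ to Theorem~\ref{thm1}, since $k-i\ge ad_1+d_2+\cdots+d_e-r$ in that range. For $i\ge n+2$ the quotient $\mathcal{O}_{\mathbb{P}^r}/\mathcal{I}_X^a$ is supported on $X$ (dimension $n$), so $H^{i-1}$ of it vanishes, and $H^i(\mathbb{P}^r,\mathcal{O}_{\mathbb{P}^r}(k-i))=0$ for $0<i<r$ (the edge case $i=r$ being immediate from the twist not being negative enough).

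\emph{Part (ii).} The same analysis at the boundary twist $ad_1+d_2+\cdots+d_e-e$ shows that failure of regularity is equivalent to
\[
H^{n+1}\!\left(\mathbb{P}^r, \mathcal{I}_X^a(m)\right)\;\cong\; H^n\!\left(X, \mathcal{O}_{\mathbb{P}^r}/\mathcal{I}_X^a(m)\right)\;\neq\; 0,\qquad m:=ad_1+d_2+\cdots+d_e-r-1,
\]
and, chasing the filtration, to the existence of some $j\in\{0,\ldots,a-1\}$ with $H^n(X,\mathrm{Sym}^j(N_{X/\mathbb{P}^r}^\vee)(m))\neq 0$. For the ``if'' direction, when $X$ is a complete intersection of type $(d_1,\ldots,d_e)$ we have $N_{X/\mathbb{P}^r}^\vee=\bigoplus\mathcal{O}_X(-d_i)$ and $\omega_X=\mathcal{O}_X(d_1+\cdots+d_e-r-1)$, so the summand of $\mathrm{Sym}^{a-1}(N^\vee)(m)$ indexed by $(a-1,0,\ldots,0)$ is exactly $\omega_X$, which has non-zero top cohomology.

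For the ``only if'' direction I would follow the strategy of Bertram--Ein--Lazarsfeld in Theorem~\ref{thm2}(ii). Serre duality turns the non-vanishing into a non-zero section of $\mathrm{Sym}^j(N_{X/\mathbb{P}^r})\otimes\omega_X(-m)$. Choosing generic sections $\sigma_1,\ldots,\sigma_e$ of degrees $d_1,\ldots,d_e$ from the defining ideal of $X$, we obtain conormal sections $\bar\sigma_i\in H^0(X,N_{X/\mathbb{P}^r}^\vee(d_i))$, and pairing the assumed section against a product of these (with $\bar\sigma_1$ taken with high multiplicity in order to absorb the large $ad_1$ twist) yields a non-zero element of $H^0(X,\mathcal{O}_X)$. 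A degree count based on $d_1\ge d_i$ forces $j=a-1$, after which the wedge $\bar\sigma_1\wedge\cdots\wedge\bar\sigma_e$ must be nowhere vanishing on $X$, identifying $X$ scheme-theoretically with the complete intersection $V(\sigma_1,\ldots,\sigma_e)$.

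\emph{Main obstacle.} The hard step will be the ``only if'' direction, specifically (a) ruling out non-vanishing for all $j<a-1$ by a sharp degree inequality using $d_1=\max d_i$, and (b) extracting from a single symmetric-power section the scheme-theoretic complete-intersection property for $X$. When $a=1$ only $j=0$ occurs and (b) recovers BEL's original wedge-product argument; for $a\ge 2$ the extra symmetric-power factor requires a careful pairing argument to remain non-degenerate, and this is where the work of generalizing \cite{BEL} is concentrated.
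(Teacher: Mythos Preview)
Your outline for Part~(i) and the ``if'' half of Part~(ii) is essentially the paper's argument, with only cosmetic differences (the paper uses the sequence $0\to\mathcal{I}_X^a\to\mathcal{I}_X^{a-1}\to\textnormal{Sym}^{a-1}N^\vee\to0$ rather than your full filtration of $\mathcal{O}_{\mathbb{P}^r}/\mathcal{I}_X^a$, but the effect is the same).

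For the ``only if'' direction you have correctly isolated the hard step, but the resolution you sketch has a genuine gap, and the paper proceeds differently. Your plan is to contract the Serre-dual section $s\in H^0(\textnormal{Sym}^jN\otimes\omega_X(-m))$ against a specific monomial $\bar\sigma_1^{\,j}$ (times the wedge $\bar\sigma_1\wedge\cdots\wedge\bar\sigma_e$) and assert that the result is nonzero; but the pairing $\textnormal{Sym}^jN\otimes\textnormal{Sym}^jN^\vee\to\mathcal{O}_X$ can kill $s\otimes\bar\sigma_1^{\,j}$ even when both factors are nonzero, and you give no mechanism to rule this out. The paper sidesteps the problem entirely. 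First, rather than excluding $j<a-1$ by a degree count, it applies Theorem~\ref{thm1} to $\mathcal{I}_X^{a-1}$ at the twist $\Delta_a-n$ to kill both $H^n$ and $H^{n+1}$, so the long exact sequence gives $H^{n+1}(\mathcal{I}_X^a(\Delta_a-n))\cong H^n(\textnormal{Sym}^{a-1}N^\vee(\Delta_a-n))$ on the nose. Second, Serre duality reinterprets this nonzero class as a nonzero morphism $\psi:\textnormal{Sym}^{a-1}N^\vee((a-1)d_1)\to\omega_X\otimes\omega_Y^{-1}$, where $Y\supset X$ is a generically chosen complete intersection of type $(d_1,\ldots,d_e)$; since the source is globally generated (because $\mathcal{I}_X(d_1)$ is), one gets $H^0(\psi)\neq0$ and hence $H^0(\omega_X\otimes\omega_Y^{-1})\neq0$. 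This global-generation step is the robust replacement for your pairing argument. Finally, instead of arguing that the wedge is nowhere vanishing, the paper feeds $H^0(\omega_X\otimes\omega_Y^{-1})\neq0$ into the linkage sequence $0\to\omega_X\otimes\omega_Y^{-1}\to\mathcal{O}_Y\to\mathcal{O}_{X'}\to0$ for the residual scheme $X'$; together with $H^0(\mathcal{O}_Y)\cong\mathbb{C}$ this forces $X'=\emptyset$, i.e.\ $X=Y$.

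There is also a case you have not addressed: your wedge/connectedness endgame requires $\dim X\ge1$ (so that $Y$ is connected), and the linkage argument likewise. The paper treats $\dim X=0$ by a separate induction on $a$: a snake-lemma comparison of the multiplication maps $H^0(\mathcal{I}_X(d_1))\otimes\mathcal{I}_X^{a-1}\to\mathcal{I}_X^a$ and $H^0(\mathcal{I}_X(d_1))\otimes\mathcal{I}_X^{a-2}\to\mathcal{I}_X^{a-1}$ shows that $H^1(\mathcal{I}_X^a(\Delta_a))\neq0$ forces $H^1(\mathcal{I}_X^{a-1}(\Delta_{a-1}))\neq0$, and the base case $a=1$ is settled via the Cayley--Bacharach property of $Y$.
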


\section{Preliminaries}
In this section we review some basic facts and definitions. Throughout this note we work over $\mathbb{C}$.

Any arithmetically Gorenstein subscheme $X\subseteq\mathbb{P}^r$ is arithmetically Cohen-Macaulay. Note that any complete intersection subscheme in a projective space is arithmetically Gorenstein.
\begin{definition}(\cite[Definition~1.3.1]{MR})
	We say that two equidimensional subschemes $V_1$ and $V_2$ of $\mathbb{P}^r$ of the same dimension are \textit{algebraically directly G-linked} by an arithmetically Gorenstein subscheme $X\subseteq\mathbb{P}^r$ if $I(X)\subseteq I(V_1)\cap I(V_2)$ and we have $[I(X):I(V_1)]=I(V_2)$ and $[I(X):I(V_2)]=I(V_1)$. The scheme $V_1$ is said to be \textit{residual} to $V_2$ in $X$.
\end{definition}
\begin{proposition}\label{cor}
	Let $X\subseteq\mathbb{P}^r$ be an arithmetically Gorenstein scheme of dimension $n$ and $V_1$ be a closed Cohen-Macaulay subscheme of dimension $n$ in $X$. There exists an exact sequence
	$$
	0\rightarrow\omega_{V_1}\otimes\omega_X^{-1}\rightarrow\mathcal{O}_X\rightarrow\mathcal{O}_{V_2}\rightarrow0,
	$$
	where $V_2$ is a closed Cohen-Macaulay subscheme of dimension $n$ and residual to $V_1$ in $X$.
\end{proposition}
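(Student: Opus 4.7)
The plan is to define $V_2\subseteq X$ directly via an appropriate sheaf of homomorphisms, identify its ideal sheaf in $\mathcal{O}_X$ with $\omega_{V_1}\otimes\omega_X^{-1}$, and then verify the remaining properties by dualizing the resulting short exact sequence. First I would establish the identification
$$\omega_{V_1}\;\cong\;\mathcal{H}om_{\mathcal{O}_X}(\mathcal{O}_{V_1},\omega_X).$$
The arithmetic Gorenstein hypothesis on $X$ guarantees that $\omega_X$ is invertible and acts as a dualizing sheaf on $X$, while the assumption that $V_1\subseteq X$ is Cohen-Macaulay of the same dimension $n$ makes the closed immersion $V_1\hookrightarrow X$ have codimension zero at every component; the standard Ext/depth computation (equivalently, Grothendieck duality for this closed immersion) then forces $\mathcal{E}xt^{i}_{\mathcal{O}_X}(\mathcal{O}_{V_1},\omega_X)=0$ for all $i\geq 1$ and identifies the surviving $\mathcal{H}om$ with $\omega_{V_1}$. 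Tensoring by $\omega_X^{-1}$ gives $\omega_{V_1}\otimes\omega_X^{-1}\cong\mathcal{H}om_{\mathcal{O}_X}(\mathcal{O}_{V_1},\mathcal{O}_X)$, and evaluation at the section $1$ embeds this sheaf as an ideal of $\mathcal{O}_X$ (a homomorphism sending $1$ to $0$ vanishes). On global sections that ideal is precisely the colon ideal $[I(X):I(V_1)]/I(X)$, so I take $V_2\subseteq X$ to be the subscheme defined by it. The asserted short exact sequence is then the structure sequence $0\to\mathcal{I}_{V_2/X}\to\mathcal{O}_X\to\mathcal{O}_{V_2}\to 0$ after making the identifications above.

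To finish I need $V_2$ to be Cohen-Macaulay of dimension $n$ and to verify the reverse colon equality $[I(X):I(V_2)]=I(V_1)$. For this I would apply $\mathcal{H}om_{\mathcal{O}_X}(-,\omega_X)$ to the exact sequence just constructed. Since $V_1$ is Cohen-Macaulay, $\omega_{V_1}$ is reflexive with $\mathcal{H}om_{\mathcal{O}_X}(\omega_{V_1},\omega_X)\cong\mathcal{O}_{V_1}$ and $\mathcal{E}xt^{i}_{\mathcal{O}_X}(\omega_{V_1},\omega_X)=0$ for $i\geq 1$, so the long exact $\mathcal{E}xt$-sequence collapses to
$$0\to\mathcal{H}om_{\mathcal{O}_X}(\mathcal{O}_{V_2},\omega_X)\to\omega_X\to\mathcal{O}_{V_1}\otimes\omega_X\to 0$$
together with $\mathcal{E}xt^{i}_{\mathcal{O}_X}(\mathcal{O}_{V_2},\omega_X)=0$ for every $i\geq 1$; the latter vanishing is the depth criterion that $V_2$ be Cohen-Macaulay of dimension $n$, and twisting the displayed sequence by $\omega_X^{-1}$ identifies $\mathcal{H}om_{\mathcal{O}_X}(\mathcal{O}_{V_2},\mathcal{O}_X)\cong\mathcal{I}_{V_1/X}$, which reads on global sections as $[I(X):I(V_2)]=I(V_1)$.

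The main obstacle is securing the duality input used at the start. One must verify carefully that the combination of $X$ arithmetically Gorenstein with $V_1$ Cohen-Macaulay of the same dimension really does yield both the identification $\omega_{V_1}\cong\mathcal{H}om_{\mathcal{O}_X}(\mathcal{O}_{V_1},\omega_X)$ and the vanishing of $\mathcal{E}xt^{i}_{\mathcal{O}_X}(\mathcal{O}_{V_1},\omega_X)$ for $i\geq 1$, together with the analogous biduality applied to $\omega_{V_1}$ that is used to run the argument in reverse for $V_2$. Once these structural facts are granted, the remainder is a routine manipulation of the constructed short exact sequence.
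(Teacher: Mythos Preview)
Your proposal is correct and follows the standard liaison-theoretic argument; the paper's own proof simply cites Eisenbud's Theorem~21.23 and Migliore's Proposition~5.2.6, which supply precisely the duality inputs you identify as the main obstacle (the identification $\omega_{V_1}\cong\mathcal{H}om_{\mathcal{O}_X}(\mathcal{O}_{V_1},\omega_X)$, the vanishing of the higher $\mathcal{E}xt$'s, and the biduality for maximal Cohen--Macaulay modules over a Gorenstein ring). Your argument is therefore an explicit unpacking of exactly what the paper invokes by reference.
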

\begin{proof}
	This statement follows from  \cite[Theorem~21.23]{Eisenbud} and  \cite[Proposition~5.2.6]{Migliore} immediately. Here $\mathcal{O}_X\rightarrow\mathcal{O}_{V_2}$ is the natural restriction morphism.
\end{proof}
\section{Proof of Theorem \ref{mainthm}}
This section is devoted to the proof of Theorem \ref{mainthm}. We start by fixing notations and set-up. As in the introduction, $X\subseteq \mathbb{P}^r$ is a smooth variety of dimension $n$ and codimension $e$ cut out scheme-theoretically by hypersurfaces of degrees $d_1\geq d_2\geq\dotsb\geq d_m>0$. Set $\Delta_1=d_1+d_2+\dotsb+d_e-e-1$ and $\Delta_a=ad_1+d_2+d_3+\dotsb+d_e-e-1$ for any integer $a\geq 2$. By the assumption on the variety $X$, we have the following exact sequence
\begin{equation}\label{3.1}
\bigoplus^m_{i=1}\mathcal{O}_{\bp}(-d_i)\rightarrow\mathcal{I}_X\rightarrow0.
\end{equation}
\subsection{Proof of Theorem \ref{mainthm} (i)} 
Theorem \ref{mainthm} (i) is an immediate consequence of Theorem \ref{thm1}. For completeness, we include a proof here. 

We prove Theorem \ref{mainthm} (i) by induction on $a$. When $a=1$, the statement follows from Theorem \ref{thm2} (i). Suppose that the statement holds for $a-1$ case, where $a\geq 2$. Now we consider the following exact sequence
\begin{equation}\label{3.2}
0\rightarrow\mathcal{I}^a_X\rightarrow\mathcal{I}^{a-1}_X\rightarrow \mathcal{I}^{a-1}_X/\mathcal{I}^a_X\rightarrow0,
\end{equation}
where $\mathcal{I}^{a-1}_X/\mathcal{I}^a_X\cong\textnormal{Sym}^{a-1}\mathcal{N}^*_{X/\bp}$ and $\mathcal{N}^*_{X/\bp}$ is the conormal sheaf of $X$ in $\bp$.
It suffices to show that $H^i(\bp,\mathcal{I}^a_X(\Delta_a+2-i))=0$ for any integer $i\geq 1$.

First, for any integer $i$ such that $1\leq i\leq n+1$, we have
\[
\begin{split}
\Delta_a+2-i&\geq ad_1+d_2+d_3+\dotsb+d_e-e+1-(n+1)\\
&=ad_1+d_2+d_3+\dotsb+d_e-r
\end{split}
\]
By Theorem \ref{thm1}, we see that $H^i(\bp,\mathcal{I}^a_X(\Delta_a+2-i))=0$. 

For any integer $i\geq n+2$, we consider the following exact sequence deduced from (\ref{3.2}),
\begin{equation}\label{3.3}
H^{i-1}(\bp,(\mathcal{I}^{a-1}_X/\mathcal{I}^a_X)(\Delta_a+2-i))\rightarrow H^i(\bp,\mathcal{I}^a_X(\Delta_a+2-i))
\rightarrow H^i(\bp,\mathcal{I}^{a-1}_X(\Delta_a+2-i)).
\end{equation}
For any integer $i\geq n+2$, we have $i-1\geq n+1>\dim X$. Thus, we obtain
$$
H^{i-1}(\bp,(\mathcal{I}^{a-1}_X/\mathcal{I}^a_X)(\Delta_a+2-i))\cong H^{i-1}(\bp,\textnormal{Sym}^{a-1}\mathcal{N}^*_{X/\bp}(\Delta_a+2-i))=0
$$
The induction hypothesis implies $H^i(\bp,\mathcal{I}^{a-1}_X(\Delta_a+2-i))=0$ for any integer $i\geq n+2$. Therefore, it follows from (\ref{3.3}) that $H^i(\bp,\mathcal{I}^a_X(\Delta_a+2-i))=0$ for any integer $i\geq n+2$. This completes the proof of Theorem \ref{mainthm} (i).
\subsection{Proof of Theorem \ref{mainthm} (ii)} When $e=1$, the statement follows from the fact that $\mathcal{I}_X\cong\mathcal{O}_{\bp}(-\deg X)$ immediately. In the following proof, we assume that $e\geq 2$.

$\Leftarrow\,:$ Since $X$ is the complete intersection of hypersurfaces of degrees $d_1,d_2,\dotsc,d_e$, then we deduce that 
$
\mathcal{N}^*_{X/\bp}\cong\bigoplus^{e}_{i=1}\mathcal{O}_X(-d_i).
$
Thus, we obtain
$$
\textnormal{Sym}^{a-1}\mathcal{N}^*_{X/\bp}\cong\mathcal{O}_X(-(a-1)d_1)\oplus \mathcal{E},
$$
where $\mathcal{E}$ is a direct sum of line bundles in the form of $\mathcal{O}_X(m)$ where we have $m\geq -(a-1)d_1$. It suffices to show that
$
H^{n+1}(\bp,\mathcal{I}^a_X(\Delta_a-n))\neq0.
$
We obtain the following exact sequence deduced from (\ref{3.2}),
$$
H^n(\bp,\mathcal{I}^{a-1}_X(\Delta_a-n))\rightarrow H^n(\bp,(\mathcal{I}^{a-1}_X/\mathcal{I}^a_X)(\Delta_a-n))
$$
$$
\rightarrow H^{n+1}(\bp,\mathcal{I}^{a}_X(\Delta_a-n))
\rightarrow H^{n+1}(\bp,\mathcal{I}^{a-1}_X(\Delta_a-n)),
$$
where we have $H^n(\bp,\mathcal{I}^{a-1}_X(\Delta_a-n))=0$ and $H^{n+1}(\bp,\mathcal{I}^{a-1}_X(\Delta_a-n))=0$ by Theorem \ref{thm1}. Thus, we have
$$
H^n(\bp,(\mathcal{I}^{a-1}_X/\mathcal{I}^a_X)(\Delta_a-n))\cong H^{n+1}(\bp,\mathcal{I}^{a}_X(\Delta_a-n)).
$$
From $\mathcal{I}^{a-1}_X/\mathcal{I}^a_X\cong\textnormal{Sym}^{a-1}\mathcal{N}^*_{X/\bp}\cong\mathcal{O}_X(-(a-1)d_1)\oplus \mathcal{E}$, we see that $\mathcal{O}_X(\Delta_1-n)$ is a direct summand of $(\mathcal{I}^{a-1}_X/\mathcal{I}^a_X)(\Delta_a-n)$. By Serre duality, we have
$$
H^n(X,\mathcal{O}_X(\Delta_1-n))^*\cong H^0(X,\mathcal{O}_X)\neq 0.
$$
Thus, we conclude that $H^n(\bp,(\mathcal{I}^{a-1}_X/\mathcal{I}^a_X)(\Delta_a-n))\neq 0$, which implies that 
$
H^{n+1}(\bp,\mathcal{I}^a_X(\Delta_a-n))\neq0.
$

$\Rightarrow\,:$ We consider $\dim X\geq 1$ case first. For any integer $i$ such that $1\leq i\leq n=\dim X$, we obtain $H^i(\bp,\mathcal{I}^a_X(\Delta_a+1-i))=0$ by Theorem \ref{thm1}. 

For any integer $i\geq n+2=\dim X+2$,  $H^{i-1}(\bp,(\mathcal{I}^{a-1}_X/\mathcal{I}^a_X)(\Delta_a+1-i))=0$ and $H^{i}(\bp,(\mathcal{I}^{a-1}_X/\mathcal{I}^a_X)(\Delta_a+1-i))=0$ follow from $\mathcal{I}^{a-1}_X/\mathcal{I}^a_X\cong\textnormal{Sym}^{a-1}\mathcal{N}^*_{X/\bp}$. In addition, we have the following exact sequence deduced from (\ref{3.2}),
$$
H^{i-1}(\bp,(\mathcal{I}^{a-1}_X/\mathcal{I}^a_X)(\Delta_a+1-i))\rightarrow H^i(\bp,\mathcal{I}^a_X(\Delta_a+1-i))
$$
$$
\rightarrow H^{i}(\bp,\mathcal{I}^{a-1}_X(\Delta_a+1-i))\rightarrow H^{i}(\bp,(\mathcal{I}^{a-1}_X/\mathcal{I}^a_X)(\Delta_a+1-i)).
$$
It follows from Theorem \ref{mainthm} (i) that  $H^{i}(\bp,\mathcal{I}^{a-1}_X(\Delta_a+1-i))=0$. Therefore, we have $H^i(\bp,\mathcal{I}^a_X(\Delta_a+1-i))=0$ for any integer $i\geq n+2$. Since $\mathcal{I}^a_X$ fails to be $(\Delta_a+1)$-regular, then we deduce that
$
H^{n+1}(\bp,\mathcal{I}^a_X(\Delta_a-n))\neq0.
$

Now we consider the following exact sequence deduced from (\ref{3.2}),
$$
H^n(\bp,\mathcal{I}^{a-1}_X(\Delta_a-n))\rightarrow H^n(\bp,\mathcal{I}^{a-1}_X/\mathcal{I}^a_X(\Delta_a-n))
$$
$$
\rightarrow H^{n+1}(\bp,\mathcal{I}^{a}_X(\Delta_a-n))\rightarrow H^{n+1}(\bp,\mathcal{I}^{a-1}_X(\Delta_a-n)),
$$
where we have $H^n(\bp,\mathcal{I}^{a-1}_X(\Delta_a-n))=0$ and $H^{n+1}(\bp,\mathcal{I}^{a-1}_X(\Delta_a-n))=0$ by Theorem \ref{thm1}. Thus, we obtain
$$
H^n(\bp,(\mathcal{I}^{a-1}_X/\mathcal{I}^a_X)(\Delta_a-n))\cong H^{n+1}(\bp,\mathcal{I}^{a}_X(\Delta_a-n))\neq 0.
$$
From $\mathcal{I}^{a-1}_X/\mathcal{I}^a_X\cong\textnormal{Sym}^{a-1}\mathcal{N}^*_{X/\bp}$, we have
$
H^n(X,\textnormal{Sym}^{a-1}\mathcal{N}^*_{X/\bp}(\Delta_a-n))\neq 0.
$
By Serre duality, we see that
$$
\Hom(\textnormal{Sym}^{a-1}\mathcal{N}^*_{X/\bp}(\Delta_a-n),\omega_X)
\cong H^n(X,\textnormal{Sym}^{a-1}\mathcal{N}^*_{X/\bp}(\Delta_a-n))^*\neq0,
$$
where $\omega_X$ is the dualizing sheaf of $X$.

Arguing as in the proof of Theorem 1.2 in \cite{BEL}, we can construct a complete intersection $Y\subset \bp$ cut out scheme-theoretically by $e$ hypersurfaces of degree $d_1,d_2,\dotsc,d_e$ such that $X$ is a subvariety of $Y$. Briefly speaking, we can choose general element $s_i\in H^0(\bp,\mathcal{I}_X(d_i))$ for all $1\leq i\leq e$ defining hypersurfaces $D_i\supset X$ such that the $s_i$ generate $\mathcal{I}_X$ away from a codimension 1 subset. Then we have
$$
\Hom(\textnormal{Sym}^{a-1}\mathcal{N}^*_{X/\bp}((a-1)d_1),\omega_X\otimes\omega^{-1}_Y)\\
\cong\Hom(\textnormal{Sym}^{a-1}\mathcal{N}^*_{X/\bp}(\Delta_a-n),\omega_X)\neq0
$$
due to $\omega_Y\cong\mathcal{O}_Y(\Delta_1-n)$.
Therefore, there exists a nonzero sheaf morphism $$
\psi:\textnormal{Sym}^{a-1}\mathcal{N}^*_{X/\bp}((a-1)d_1)\rightarrow\omega_X\otimes\omega^{-1}_Y.
$$
Meanwhile, it follows from (\ref{3.1}) that $\mathcal{I}_X(d_1)$ is globally generated. So $$\textnormal{Sym}^{a-1}\mathcal{N}^*_{X/\bp}((a-1)d_1)$$ is also globally generated. Hence, 
$$
H^0(\psi): H^0(X,\textnormal{Sym}^{a-1}\mathcal{N}^*_{X/\bp}((a-1)d_1))\rightarrow H^0(X,\omega_X\otimes\omega^{-1}_Y)
$$ 
is nonzero. In particular, we have $H^0(X,\omega_X\otimes\omega^{-1}_Y)\neq 0$.

Let $X'$ be the scheme which is residual to $X$ in $Y$, then we have the following exact sequence by Proposition \ref{cor}
$$
0\rightarrow\omega_X\otimes\omega^{-1}_Y\rightarrow\mathcal{O}_Y\rightarrow\mathcal{O}_{X'}\rightarrow0.
$$
By taking global sections, we have the following exact sequence
$$
0\rightarrow H^0(X,\omega_X\otimes\omega^{-1}_Y)\rightarrow H^0(Y,\mathcal{O}_Y)\rightarrow H^0(X',\mathcal{O}_{X'}).
$$
From $ H^0(X,\omega_X\otimes\omega^{-1}_Y)\neq 0$ and $H^0(Y,\mathcal{O}_Y)\cong\mathbb{C}$, we see that $$H^0(Y,\mathcal{O}_Y)\rightarrow H^0(X',\mathcal{O}_{X'})$$ is zero map. However, $H^0(Y,\mathcal{O}_Y)\rightarrow H^0(X',\mathcal{O}_{X'})$ is the restriction map, which is nontrivial if $X'\neq\emptyset$. Thus, we conclude that $X'=\emptyset$ and $Y=X$, i.e., $X$ is the complete intersection of hypersurfaces of degrees $d_1,d_2,\dotsc,d_e$.

Now we prove $\dim X=0$ case by induction on $a$. For completeness, we include a proof of $a=1$ case here. Twisting the exact sequence 
$$0
\rightarrow\mathcal{I}_X\rightarrow\mathcal{O}_\bp\rightarrow\mathcal{O}_X\rightarrow0
$$ 
by $\mathcal{O}_\bp(\Delta_1+1-i)$ and passing to cohomology, we obtain the following exact sequence
$$
H^{i-1}(\bp,\mathcal{O}_X(\Delta_1+1-i))\rightarrow H^i(\bp,\mathcal{I}_X(\Delta_1+1-i))\rightarrow H^i(\bp,\mathcal{O}_\bp(\Delta_1+1-i)).
$$
For any integer $i\geq 2$, we see that $H^{i-1}(\bp,\mathcal{O}_X(\Delta_1+1-i))=0$ due to the assumption $\dim X=0$. Meanwhile, it is clear that $H^i(\bp,\mathcal{O}_\bp(\Delta_1+1-i))=0$ for any integer $i\geq 2$. Thus, we deduce that $H^i(\bp,\mathcal{I}_X(\Delta_1+1-i))=0$ for any integer $i\geq 2$. Since $X$ is not $(\Delta_1+1)$-regular, 
then we have $$H^1(\bp,\mathcal{I}_X(\Delta_1))\neq 0.$$

Similarly as the proof of positive dimensional case, there exists a complete intersection $Y\subset \bp$ cut out scheme-theoretically by $e$ hypersurfaces of degree $d_1,d_2,\dotsc,d_e$ such that $X$ is a subvariety of $Y$. Hence, we have the following Koszul complex
\begin{equation}\label{0.2}
0\rightarrow\mathcal{O}_{\bp}(-\sum_{i=1}^{e}d_i)\rightarrow\cdots\rightarrow\bigoplus^e_{i=1}\mathcal{O}_{\bp}(-d_i)\rightarrow\mathcal{I}_Y\rightarrow0.
\end{equation}
Twisting (\ref{0.2}) by $\mathcal{O}_\bp(\Delta_1)$ and splitting it to short exact sequences, we have
$$
H^1(\bp,\mathcal{I}_Y(\Delta_1))\cong H^e(\bp,\mathcal{O}_{\bp}(-\sum_{i=1}^{e}d_i+\Delta_1))\cong H^0(\bp,\mathcal{O}_{\bp})^*\cong\mathbb{C},$$ 
which implies $h^1(\bp,\mathcal{I}_Y(\Delta_1))=1$.

Assume that $X$ is a proper subscheme of $Y$, then we can choose a zero-dimensional scheme $W$ of length $h^0(Y,\mathcal{O}_Y)-1$ and $X\subseteq W\subseteq Y$. It follows from the short exact sequence
$$
0\rightarrow\mathcal{I}_W\rightarrow\mathcal{I}_X\rightarrow\mathcal{I}_X/\mathcal{I}_W\rightarrow0
$$
and $\dim\textnormal{Supp\,}\mathcal{I}_X/\mathcal{I}_W=0$ that $H^1(\bp,\mathcal{I}_W(\Delta_1))\neq 0$.

Twisting the exact sequence $$
0\rightarrow\mathcal{I}_Y\rightarrow\mathcal{I}_W\rightarrow\mathcal{I}_W/\mathcal{I}_Y\rightarrow0
$$ by $\mathcal{O}_\bp(\Delta_1)$
and then passing to cohomology, we see
$$
h^0(\bp,\mathcal{I}_W(\Delta_1))-h^1(\bp,\mathcal{I}_W(\Delta_1))=h^0(\bp,\mathcal{I}_Y(\Delta_1))-h^1(\bp,\mathcal{I}_Y(\Delta_1))+1.
$$
From $h^1(\bp,\mathcal{I}_W(\Delta_1))\neq 0$ and $h^1(\bp,\mathcal{I}_Y(\Delta_1))=1$, we have $$h^0(\bp,\mathcal{I}_W(\Delta_1))>h^0(\bp,\mathcal{I}_Y(\Delta_1)).$$ But this contradicts with the Cayley-Bacharach property of $Y$ with respect to $\mathcal{O}_\bp(\Delta_1)$ (\cite[Theorem~1.1]{K}). Therefore, we conclude that $X=Y$, i.e., $X$ is the complete intersection of hypersurfaces of degrees $d_1,d_2,\dotsc,d_e$.

Suppose that the statement holds for $a-1$ case, where $a\geq 2$. For any integer $i\geq 2$, we consider the following exact sequence deduced from (\ref{3.2}),
\begin{equation*}
H^{i-1}(\bp,(\mathcal{I}^{a-1}_X/\mathcal{I}^a_X)(\Delta_{a}+1-i))\rightarrow H^{i}(\bp,\mathcal{I}^a_X(\Delta_a+1-i))\rightarrow H^{i}(\bp,\mathcal{I}^{a-1}_X(\Delta_a+1-i)).
\end{equation*}
It follows from Theorem \ref{mainthm} (i) that $H^{i}(\bp,\mathcal{I}^{a-1}_X(\Delta_a+1-i))=0$ for any integer $i\geq 2$. Meanwhile, we see that $H^{i-1}(\bp,(\mathcal{I}^{a-1}_X/\mathcal{I}^a_X)(\Delta_{a}+1-i))=0$ for any integer $i\geq 2$ due to the assumption $\dim X=0$. Thus, we obtain $H^{i}(\bp,\mathcal{I}^a_X(\Delta_a+1-i))=0$ for any integer $i\geq 2$. Since $\mathcal{I}^a_X$ is not $(\Delta_a+1)$-regular, then we have $H^1(\bp,\mathcal{I}^a_X(\Delta_a))\neq 0$. Moreover, we can prove the following.

\begin{claim}\label{c3.1}
	$H^1(\bp,\mathcal{I}^{a-1}_X(\Delta_{a-1}))\neq 0$.
\end{claim}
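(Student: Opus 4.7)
The plan is to deduce the nonvanishing $H^1(\bp, \mathcal{I}^{a-1}_X(\Delta_{a-1})) \neq 0$ from the already-established nonvanishing $H^1(\bp, \mathcal{I}^a_X(\Delta_a)) \neq 0$ via a multiplication argument through a general divisor containing $X$. Once this claim is in hand, the vanishings $H^i(\bp, \mathcal{I}^{a-1}_X(\Delta_{a-1}+1-i)) = 0$ for $i \geq 2$ (which hold for the same reasons as for $\mathcal{I}^a_X$ above) imply that $\mathcal{I}^{a-1}_X$ fails $(\Delta_{a-1}+1)$-regularity, and then the inductive hypothesis forces $X$ to be a complete intersection.

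By (\ref{3.1}) the sheaf $\mathcal{I}_X(d_1)$ is globally generated, so by Bertini one can choose a general section $s_1 \in H^0(\bp, \mathcal{I}_X(d_1))$ whose zero locus $D_1$ is a smooth hypersurface of degree $d_1$ containing $X$. Multiplication by $s_1$ gives an injective sheaf homomorphism $\mathcal{I}^{a-1}_X(-d_1) \hookrightarrow \mathcal{I}^a_X$; denoting its cokernel by $\mathcal{Q}$, a stalk computation (trivial off $X$, and at $p \in X$ using that $\mathcal{I}_{X,p}$ is generated by a regular sequence $x_1, \dotsc, x_e$ with $s_1 \equiv x_1 \pmod{\mathcal{I}^2_{X,p}}$ for generic $s_1$) identifies $\mathcal{Q}$ with $\iota_* \mathcal{I}^a_{X/D_1}$, where $\iota : D_1 \hookrightarrow \bp$ and $\mathcal{I}_{X/D_1}$ is the ideal sheaf of $X$ in $D_1$. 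Twisting the resulting short exact sequence by $\mathcal{O}(\Delta_a)$ and taking cohomology yields
$$
H^1(\bp, \mathcal{I}^{a-1}_X(\Delta_{a-1})) \to H^1(\bp, \mathcal{I}^a_X(\Delta_a)) \to H^1(D_1, \mathcal{I}^a_{X/D_1}(\Delta_a)),
$$
so the claim reduces to the vanishing of the rightmost group.

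To prove $H^1(D_1, \mathcal{I}^a_{X/D_1}(\Delta_a)) = 0$, I would adapt the blow-up argument of Bertram-Ein-Lazarsfeld to the smooth ambient $D_1$: inside $D_1$, the smooth subvariety $X$ has codimension $e-1$ and is cut out scheme-theoretically by $f_2|_{D_1}, \dotsc, f_m|_{D_1}$ of degrees $d_2, \dotsc, d_m$. On the blow-up $\pi : \widetilde{D}_1 \to D_1$ along $X$ with exceptional divisor $\widetilde{E}$, one has $\omega_{\widetilde{D}_1} = \pi^*\mathcal{O}_{D_1}(d_1-e-1) + (e-2)\widetilde{E}$. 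Writing the test bundle as $\pi^*\mathcal{O}_{D_1}((a-1)(d_1-d_2))$ plus the nef sum $(a-1)(\pi^*\mathcal{O}_{D_1}(d_2) - \widetilde{E}) + \sum_{i=2}^{e}(\pi^*\mathcal{O}_{D_1}(d_i) - \widetilde{E})$—each term nef because $\mathcal{I}_{X/D_1}(d_i)$ is globally generated for $i \geq 2$—Kawamata-Viehweg vanishing applies and yields the desired cohomology vanishing provided $(a-1)(d_1-d_2) \geq 1$.

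The main obstacle I anticipate is the boundary case $d_1 = d_2$, where the pulled-back ample contribution degenerates and bigness of the test bundle on $\widetilde{D}_1$ must be verified directly via a generic-finiteness check for the map defined by the linear system $|\mathcal{I}_{X/D_1}(d_2)|$. The wholly degenerate case $d_1 = \dotsb = d_e = 1$ (in which $X$ is a linear subspace and hence automatically a complete intersection) is handled separately.
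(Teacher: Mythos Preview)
Your approach is genuinely different from the paper's, and the outline (pass to a general hypersurface $D_1$ through $X$, identify the cokernel of $\cdot s_1$ with $\iota_*\mathcal{I}^a_{X/D_1}$, then kill $H^1(D_1,\mathcal{I}^a_{X/D_1}(\Delta_a))$) is reasonable. But the blow-up step has real gaps. First, the assertion that $\mathcal{I}_{X/D_1}(d_i)$ is globally generated for all $i\ge 2$ is not justified: the generic choice of $s_1$ lets you drop the single degree-$d_1$ generator, so $\mathcal{I}_{X/D_1}$ is generated in degrees $d_2,\dots,d_m$ and hence $\mathcal{I}_{X/D_1}(d_2)$ is globally generated; but for $i\ge 3$ with $d_i<d_2$ nothing forces the lower-degree restrictions $f_i|_{D_1},\dots,f_m|_{D_1}$ to already generate. (Already on $\bp$ the analogous claim fails: for two points on a line in $\mathbb{P}^2$ one has $d_1=2$, $d_2=1$, and $\mu^*\mathcal{O}(1)-E$ restricts to a $(-1)$-curve, hence is not nef.) Second, the bigness problem at $d_1=d_2$ is acknowledged but left open; your proposed ``generic-finiteness check'' is nontrivial and in fact the self-intersection $(\pi^*\mathcal{O}_{D_1}(d_2)-\widetilde E)^{\,r-1}$ can vanish exactly when $X$ is the complete intersection you are trying to detect. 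So as written the argument does not close.

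The paper avoids all of this with a short homological trick that stays on $\bp$ and uses only part~(i). Instead of a single section, it takes the full evaluation map $H^0(\bp,\mathcal I_X(d_1))\otimes\mathcal I_X^{a-1}(\Delta_{a-1})\twoheadrightarrow\mathcal I_X^{a}(\Delta_a)$ with kernel $\mathcal K_a$, writes the analogous sequence for $a-1$ with kernel $\mathcal K'_a$, and applies the snake lemma to the resulting $3\times 3$ diagram. From the middle row and the $(\Delta_{a-1}+2)$- and $(\Delta_{a-2}+2)$-regularity already established in~(i) one gets $H^2(\bp,\mathcal K'_a)=0$; since $\mathcal K'_a/\mathcal K_a$ is supported on the zero-dimensional $X$ its $H^1$ vanishes; hence $H^2(\bp,\mathcal K_a)=0$. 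The top row then gives a surjection $H^0(\bp,\mathcal I_X(d_1))\otimes H^1(\bp,\mathcal I_X^{a-1}(\Delta_{a-1}))\twoheadrightarrow H^1(\bp,\mathcal I_X^{a}(\Delta_a))$, so the right-hand side being nonzero forces the left factor to be nonzero. No Bertini, no blow-up, no Kawamata--Viehweg, and no boundary cases.
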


\textnormal{Proof of Claim \ref{c3.1}.} Similarly as $\dim X\geq 1$ case, $\mathcal{I}_X(d_1)$ is globally generated. Thus, we have the following exact sequences
\begin{equation}\label{K_a}
0\rightarrow\mathcal{K}_a\rightarrow H^0(\bp,\mathcal{I}_X(d_1))\otimes \mathcal{I}^{a-1}_X(\Delta_{a-1})\rightarrow\mathcal{I}^a_X(\Delta_a)\rightarrow 0
\end{equation}
and
\begin{equation}\label{K'_a}
0\rightarrow\mathcal{K}'_{a}\rightarrow H^0(\bp,\mathcal{I}_X(d_1))\otimes \mathcal{I}^{a-2}_X(\Delta_{a-1})\rightarrow\mathcal{I}^{a-1}_X(\Delta_a)\rightarrow 0
\end{equation}
where $\mathcal{K}_a$ is the kernel of the canonical sheaf morphism 
$$ 
H^0(\bp,\mathcal{I}_X(d_1))\otimes \mathcal{I}^{a-1}_X(\Delta_{a-1})\rightarrow\mathcal{I}^a_X(\Delta_a)
$$ 
and $\mathcal{K}'_{a}$ is the kernel of the canonical sheaf morphism
$$
H^0(\bp,\mathcal{I}_X(d_1))\otimes \mathcal{I}^{a-2}_X(\Delta_{a-1})\rightarrow\mathcal{I}^{a-1}_X(\Delta_a).
$$
Based on (\ref{K_a}), (\ref{K'_a}) and the snake lemma, we have the following commutative diagram where all rows and columns are exact
\begin{equation*}
\begin{tikzcd}
& 0 \arrow[d]                                         & 0 \arrow[d]           & 0 \arrow[d]                                           &   \\
0 \arrow[r] & \mathcal{K}_a \arrow[d] \arrow[r]                   &  H^0(\bp,\mathcal{I}_X(d_1))\otimes \mathcal{I}^{a-1}_X(\Delta_{a-1}) \arrow[r] \arrow[d] & \mathcal{I}^a_X(\Delta_a) \arrow[r] \arrow[d]         & 0 \\
0 \arrow[r] & \mathcal{K}'_{a} \arrow[d] \arrow[r]               &  H^0(\bp,\mathcal{I}_X(d_1))\otimes \mathcal{I}^{a-2}_X(\Delta_{a-1}) \arrow[r] \arrow[d] & \mathcal{I}^{a-1}_X(\Delta_{a}) \arrow[r] \arrow[d] & 0 \\
0 \arrow[r] & \mathcal{K}'_{a}/\mathcal{K}_a \arrow[r] \arrow[d] & H^0(\bp,\mathcal{I}_X(d_1))\otimes (\mathcal{I}^{a-2}_X/\mathcal{I}^{a-1}_X)(\Delta_{a-1}) \arrow[r] \arrow[d] & (\mathcal{I}^{a-1}_X/\mathcal{I}^{a}_X)(\Delta_{a})  \arrow[r] \arrow[d]                                 & 0 \\
& 0                                                   & 0                     & 0                                                     &  
\end{tikzcd}
\end{equation*}
It follows from the middle row of the diagram and Theorem \ref{mainthm} (i) that $H^2(\bp,\mathcal{K}'_{a})=0$. From $\dim\,\textnormal{Supp}\,\mathcal{K}'_{a}/\mathcal{K}_a=0$, we see that $H^1(\bp,\mathcal{K}'_{a}/\mathcal{K}_{a})=0$. Thus, we obtain $H^2(\bp,\mathcal{K}_a)=0$ by the left column of the diagram. Then we have the following exact sequence by the top row of the diagram
$$
H^0(\bp,\mathcal{I}_X(d_1))\otimes H^1(\bp,\mathcal{I}^{a-1}_X(\Delta_{a-1}))\rightarrow H^1(\bp,\mathcal{I}^a_X(\Delta_a))\rightarrow H^2(\bp,\mathcal{K}_a)=0
$$
Therefore, it follows from $H^1(\bp,\mathcal{I}^a_X(\Delta_a))\neq 0$ that  $H^1(\bp,\mathcal{I}^{a-1}_X(\Delta_{a-1}))\neq 0$. This completes the proof of Claim \ref{c3.1}.  

It follows from Claim \ref{c3.1} that $\mathcal{I}^{a-1}_X$ is not $(\Delta_{a-1}+1)$-regular. Then by the induction hypothesis, $X$ is the complete intersection of hypersurfaces of degrees $d_1,d_2,\dotsc,d_e$. This completes of the proof of Theorem \ref{mainthm} (ii).
\begin{remark}
	Another way to show that the ``if" part of Theorem \ref{mainthm} (ii) holds is to use the graded minimal free resolution of $\mathcal{I}^a_X$ (e.g. see \cite[Theorem~2.1]{GT}).
\end{remark}
\begin{remark}
	The argument of the proof of $\dim X=0$ case also works for $\dim X\geq 1$ case after making necessary changes.
\end{remark}
\begin{remark}
	By (almost) the same argument of the proof of Theorem \ref{mainthm} (ii), one can easily see that the bound of Castelnuovo-Mumford regularity estabilished by Niu (\cite[Corollary~4]{Niu}) is sharp exactly for complete intersections.
\end{remark}
\subsection*{Acknowledgement}
The author wishes to thank his advisor Lawrence Ein for helpful discussions and warm encouragement. The author is also grateful for the anonymous referee’s input which improves the exposition of the note.
\bibliographystyle{abbrv}
\bibliography{reference}

\end{document}